\documentclass[12pt,a4paper]{amsart}
\usepackage{amsfonts,color}
\usepackage{amsthm}
\usepackage{amsmath}
\usepackage{amscd}
\usepackage[utf8]{inputenc}
\usepackage{t1enc}
\usepackage[mathscr]{eucal}
\usepackage{indentfirst}
\usepackage{graphicx}
\usepackage{graphics}
\usepackage{pict2e}
\usepackage{epic}
\usepackage{url}
\usepackage{epstopdf}
\usepackage{comment}
\usepackage{todonotes}
\usepackage{hyperref}
\usepackage{subcaption}

\numberwithin{equation}{section}
\usepackage[margin=2.6cm]{geometry}

\usepackage{pgfplots}
\usepackage{xcolor}
\usepackage{tikz}
\usetikzlibrary{matrix,arrows,decorations.pathmorphing}
\usetikzlibrary{calc,decorations.pathreplacing}
\usetikzlibrary{quotes,angles}
\usetikzlibrary{shapes}
\usetikzlibrary{patterns}
\usetikzlibrary{arrows.meta}

\tikzstyle{vertex}=[draw=black,circle,fill=black,minimum size=6pt, inner sep=0pt, outer sep=0pt,text=black,line width=0mm]

\tikzstyle{Sqvertex}=[draw=black,shape=rectangle, minimum size=10pt, fill=white]

\tikzstyle{Cvertex}=[draw=black,shape=circle, minimum size=6pt, fill=white]

\tikzstyle{vertex_blue}=[draw=black,circle,fill=blue,minimum size=6pt, inner sep=0pt, outer sep=0pt,text=black,line width=0mm]
\tikzstyle{vertex_red}=[draw=black,circle,fill=red,minimum size=6pt, inner sep=0pt, outer sep=0pt,text=black,line width=0mm]
\tikzstyle{vertex_green}=[draw=black,circle,fill=green,minimum size=6pt, inner sep=0pt, outer sep=0pt,text=black,line width=0mm]
\tikzstyle{my_arrow}=
[shape=arrow,length=10pt]
\tikzstyle{c0}=[shape=circle, minimum size=4pt, fill=white]
\tikzstyle{c1}=[shape=rectangle, minimum size=7pt, fill=red]
\tikzstyle{c2}=[shape=diamond, minimum size=10pt, fill=blue]
\tikzstyle{mybox} = [rectangle, rounded corners, minimum width=3cm, minimum height=1cm,text centered, draw=black]
\tikzset{base/.style = {rectangle, rounded corners, draw=black,
                           minimum width=3cm, minimum height=1cm,
                           text centered}}

\usepgfplotslibrary{fillbetween}
\pgfplotsset{mystyle/.style={%
        xmin=-2,
        xmax=7.9,
        ymin=-1,
        xtick = {1,3},
        xticklabels = {{1},$d-1$},
        ytick = {1}
    }
}

\definecolor{darkerblue}{HTML}{065A82} 
\definecolor{lighterblue}{HTML}{1C7293} 

\theoremstyle{plain}
\newtheorem{Th}{Theorem}[section]

 \theoremstyle{definition}
\newtheorem{Def}[Th]{Definition}

\newtheorem{Rem}[Th]{Remark}
\newtheorem{?}[Th]{Problem}
\newtheorem{Ex}[Th]{Example}

\newcommand{\ee}{\varepsilon}

\newcommand{\x}{\underline{x}}

\begin{document}

\title[Eulerian orientations for Benjamini--Schramm convergent graph sequences]{Number of Eulerian orientations for Benjamini--Schramm convergent graph sequences}

\author[F. Bencs]{Ferenc Bencs}

\address{Centrum Wiskunde \& Informatica, P.O. Box 94079 1090 GB Amsterdam, The Netherlands.}

\email{ferenc.bencs@gmail.com}

\author[M. Borb\'enyi]{M\'arton Borb\'enyi}

\address{ELTE: E\"{o}tv\"{o}s Lor\'{a}nd University  Mathematics Institute, H-1117 Budapest, 
P\'{a}zm\'{a}ny P\'{e}ter s\'{e}t\'{a}ny 1/C \and HUN-REN Alfr\'ed R\'enyi Institute of Mathematics, H-1053 Budapest Re\'altanoda utca 13-15}

\email{marton.borbenyi@tkk.elte.hu}

\author[P. Csikv\'ari]{P\'{e}ter Csikv\'{a}ri}

\address{HUN-REN Alfr\'ed R\'enyi Institute of Mathematics, H-1053 Budapest Re\'altanoda utca 13-15 \and ELTE: E\"{o}tv\"{o}s Lor\'{a}nd University  Mathematics Institute, Department of Computer
Science  H-1117 Budapest, 
P\'{a}zm\'{a}ny P\'{e}ter s\'{e}t\'{a}ny 1/C}

\email{peter.csikvari@gmail.com}

\thanks{The first author was supported by the Netherlands Organisation of Scientific Research (NWO): VI.Veni.222.303. The second author was supported by the \'{U}NKP-23-3 New National Excellence Program of the Ministry for Culture and Innovation from the source of the National Research, Development and Innovation Fund (NKFIH). The research was supported by the MTA-R\'enyi Counting in Sparse Graphs ''Momentum'' Research
Group, and by Dynasnet European Research Council Synergy project -- grant number ERC-2018-SYG 810115.}

\begin{abstract}
For a graph $G$ let $\ee(G)$ denote the number of Eulerian orientations, and $v(G)$ denote the number of vertices of $G$. We show that if $(G_n)_n$ is a sequence of Eulerian graphs that are convergent in Benjamini--Schramm sense, then $\lim_{n\to \infty}\frac{1}{v(G_n)}\ln \ee(G_n)$ is convergent.
\end{abstract}

\maketitle

\section{Eulerian orientations}

A graph $G$ is called Eulerian if every vertex degree is an even number. (In general, connectedness of $G$ is also a requirement, but in this paper we do not need this assumption.) It is a classical fact that the edges of an Eulerian graph can be oriented in such a way that at every vertex the in-degree and the out-degree are equal. Such an orientation is called Eulerian or balanced orientation. The number of Eulerian oriantations is denoted by $\varepsilon(G)$. Counting Eulerian orientations has triggered considerable interest both in combinatorics, computer science and statistical physics. Probably, the best known result is due to Lieb \cite{lieb2004residual} who determined the asymptotic number of Eulerian orientations of large square grid graphs. In physics the limit value is called the residual entropy or the entropy of the ice model. Baxter \cite{baxter1969f} determined the residual entropy for the large triangular lattices. Welsh \cite{welsh1999tutte} observed that for a $4$--regular graph the Tutte-polynomial evaluation $|T_G(0,-2)|$  is exactly the number of Eulerian orientations since nowhere-zero $Z_3$-flows and Eulerian orientations are in one-to-one correspondence for $4$--regular graphs. In this paper we focus on bounded degree graphs, but there has been many advances on asymptotic enumeration of Eulerian orientations in case of non-bounded degree graphs too, see the papers 
\cite{isaev2011asymptotic,isaev2023cumulant,isaev2024correlation,isaev2013asymptotic,mckay1990asymptotic,mckay1998asymptotic}. Mihail and Winkler \cite{mihail1996number} gave an efficient randomized algorithm to sample and approximately count Eulerian orientations. Schrijver \cite{schrijver1983bounds} gave a lower bound for the number of Eulerian oriantations in terms of the degree sequence. He proved that if $G$ is a  graph on $n$ vertices with degree sequence $d_1,d_2,\dots ,d_n$, where $d_k$ are even for all $k$, then
$$\varepsilon(G)\geq \prod_{k=1}^n\frac{\binom{d_k}{d_k/2}}{2^{d_k/2}}.$$
In particular, for a $d$--regular graph $G$ on $n$ vertices, where $d$ is even, we have
$$\varepsilon(G)\geq \left(\frac{\binom{d}{d/2}}{2^{d/2}}\right)^n.$$
The right hand side of this inequality coincides with Pauling's original heuristic argument for the entropy of ice \cite{pauling1935structure}. This heuristic argument is based on the idea that at each vertex the probability that a random orientation is balanced is exactly $\frac{\binom{d}{d/2}}{2^d}$. Assuming an (asymptotic) independence for the vertices we get an estimate for the number of Eulerian orientations. Vergnas \cite{vergnas1990upper} proved an upper bound that has the following corollary. If $(G_n)_n$ is a sequence of $d$-regular graphs such that the length of the shortest cycle, denoted by $g(G_n)$ hereafter, tends to infinity, then
$$\lim_{n\to \infty}\frac{1}{v(G_n)}\ln \ee(G_n)=\ln \left(\frac{\binom{d}{d/2}}{2^{d/2}}\right),$$
where $v(G)$ denotes the number of vertices of a graph $G$. The aforementioned result of Lieb gives that if $(G_n)_n$ is a a sequence of toroidal grids, then
$$\lim_{n\to \infty}\frac{1}{v(G_n)}\ln \ee(G_n)=\frac{3}{2}\ln \left(\frac{4}{3}\right),$$
and the result of Baxter shows that if $(G_n)_n$ is a a sequence of triangular graphs with helical boundary condition, then
$$\lim_{n\to \infty}\frac{1}{v(G_n)}\ln \ee(G_n)=\ln \left(\frac{3\sqrt{3}}{2}\right).$$
To put the results of Lieb, Baxter and Vergnas into a common framework we need the concept of Benjamini--Schramm convergence. This concept grasps graph sequences that are locally look alike.

\begin{Def}[Benjamini--Schramm convergence]
We say that a graph sequence $(G_n)_n$ is \emph{bounded-degree} if there is a $\Delta$ such that the maximum degree of any $G_n$ is at most $\Delta$.

 For a finite graph $G$, a finite connected rooted graph $\alpha$ and a positive integer
$r$, let $\mathbb{P}(G,\alpha,r)$ be the probability that the $r$-ball
centered at a uniform random vertex of $G$ is isomorphic to $\alpha$. 

 Let $L$ be a probability distribution on (finite and infinite) connected rooted graphs; we will call $L$ a \emph{random rooted graph}.
For a finite connected rooted graph $\alpha$ and a positive integer $r$, let $\mathbb{P}(L,\alpha,r)$ be the probability that the $r$-ball
centered at the root vertex  is isomorphic to $\alpha$, where the root is chosen from the distribution $L$.

We say that a bounded-degree graph sequence $(G_n)_n$ is \emph{Benjamini--Schramm
convergent} if for all finite rooted graphs $\alpha$ and $r>0$, the
probabilities $\mathbb{P}(G_n,\alpha,r)$ converge. Furthermore, we say that \emph{$(G_n)$ Benjamini-Schramm converges to $L$},
if for all positive integers $r$ and finite rooted graphs $\alpha$, $\mathbb{P}(G_n,\alpha,r)\rightarrow \mathbb{P}(L,\alpha,r)$.
\end{Def}

The Benjamini--Schramm convergence is also called \emph{local convergence} as it primarily grasps the local structure of the graphs $(G_n)_n$.

If we take larger and larger boxes in the $d$-dimensional grid $\mathbb{Z}^d$, then it will converge to the rooted $\mathbb{Z}^d$, that is,  the corresponding random rooted graph $L$ is the distribution which takes a rooted  $\mathbb{Z}^d$ with probability $1$. (See Figure~\ref{fig:Z2} for two examples of graph sequences that converge to $\mathbb{Z}^2$.)
\begin{figure}[h!]
    \centering
    \begin{subfigure}{.5\textwidth}
    \centering
    \begin{tikzpicture}[scale=0.6]
    
         \foreach \x in {0,1,2,3,4,5,6}
            {
                \draw[thick] (\x,0) -- (\x,6);
                \draw[thick] (0,\x) -- (6,\x);
                \draw[thick, bend right=12,opacity=0.7] (\x,0) to (\x,6);
                \draw[thick, bend left=12,opacity=0.7] (0,\x) to (6,\x);
                
                \foreach \y in {0,1,2,3,4,5,6}{
                    \filldraw[black] (\x,\y) circle (3pt);
                }
            }        
    \end{tikzpicture}
    \end{subfigure}%
    \begin{subfigure}{0.5\textwidth}
    \centering
    \begin{tikzpicture}[scale=0.3]

         \foreach \x in {0,1,2,3}
        {
            \draw[thick] (7-2*\x,2*\x+1) -- (2*\x-7,2*\x+1);
            
            \draw[thick] (2*\x+1,2*\x-7) -- (2*\x+1,7-2*\x);
        
            \draw[thick] (7-2*\x,-2*\x-1) -- (2*\x-7,-2*\x-1);
    
            \draw[thick] (-2*\x-1,2*\x-7) -- (-2*\x-1,7-2*\x);
            
        }
        
        \foreach \x in {0,1,2,3,4}{
            \foreach \y in {0,1,2,3}{
                \filldraw[black] (-1+2*\x-2*\y,-7+2*\x+2*\y) circle (6pt);
                \filldraw[black] (1-2*\x+2*\y,-7+2*\x+2*\y) circle (6pt);
                
            }
        }
    \end{tikzpicture}
    \end{subfigure}
    \caption{In the picture we depict two graph sequences both converging to  $\mathbb{Z}^2$. The latter sequence consists of the so-called Aztec diamonds. Aztec diamonds are Eulerian graphs just as the toroidal grids. As we will see this implies that the normalized number of Eulerian orientations converge to the same number, that is $\frac{3}{2}\ln \left(\frac{4}{3}\right)$.}
    \label{fig:Z2}
\end{figure}
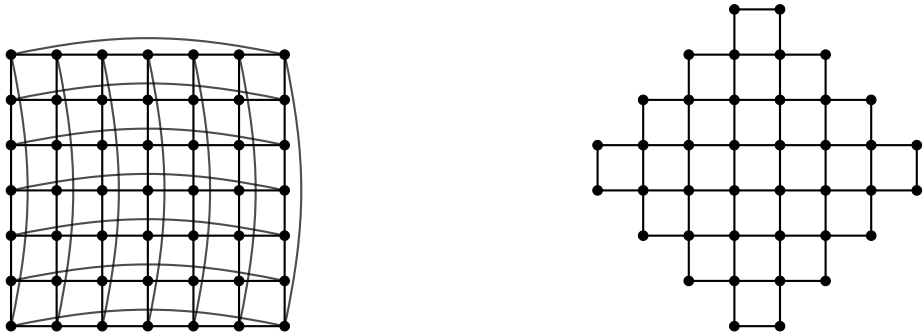

When $L$ is a certain rooted infinite graph with probability $1$
then we simply say that this rooted infinite graph is the limit without any further reference on the distribution.

There are other very natural graph sequences which are Benjamini--Schramm convergent, for instance, $(G_n)_n$ is a sequence of $d$--regular graphs such that the girth $g(G_n)\to \infty$ (length of the shortest cycle), then  it is Benjamini--Schramm convergent and we can even see its limit object: the rooted infinite $d$-regular tree $\mathbb{T}_d$.

There is an alternative way to look at graph parameters that are convergent whenever the graphs are Benjamini--Schramm convergent. For a vertex $v\in V(G)$ let $B_r(v)$ denote its neighborhood of radius $r$. Let $\mathbb{B}_r$ denote all possible $r$-neighborhoods, that is, the rooted graphs of radius at most $r$. We call a bounded graph parameter estimable, if for every $\varepsilon>0$ there are positive integers $k$ and $r$,
and an ``estimator'' function $g: \mathbb{B}_r^k 
 \to \mathbb{R}$ such that for every graph $G$ and uniform,
independently chosen random vertices $v_1,\dots ,v_k \in V (G)$, we have
$$\mathbb{P}(|f(G) - g(B_r(v_1),\dots , B_r(v_k))|>\varepsilon)\leq \varepsilon.$$ 
In other words, $g$ estimates $f$ from a sample chosen according to the rules of sampling from a bounded degree graph. Elek \cite{elek2010parameter} proved that a graph parameter is estimable if and only if it is convergent for every Benjamini--Schramm convergent graph sequence.

\begin{Th}[Elek \cite{elek2010parameter}]\label{thm:elek}
A bounded graph parameter $f$ is estimable if and only if for every Benjamini--Schramm 
convergent graph sequence $(G_n)_n$, the sequence of numbers $(f(G_n))_n$
is convergent.
\end{Th}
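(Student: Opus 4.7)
The plan is to prove both implications. For the forward direction (estimable implies convergent on BS-convergent sequences), fix $\varepsilon > 0$ together with the corresponding estimator $g : \mathbb{B}_r^k \to \mathbb{R}$. The joint distribution of $(B_r(v_1), \ldots, B_r(v_k))$ at $k$ independent uniform random vertices is the $k$-fold product of the marginal sampling distribution, which converges along a Benjamini--Schramm convergent sequence; since $g$ is a bounded function on the finite set $\mathbb{B}_r^k$, the expectation $\mathbb{E}[g(B_r(v_1), \ldots, B_r(v_k))]$ converges. The estimability bound, combined with the boundedness of $f$, then gives $|f(G_n) - \mathbb{E}[g(\ldots)]| = O(\varepsilon)$, so $\limsup_n f(G_n) - \liminf_n f(G_n) = O(\varepsilon)$, and letting $\varepsilon \to 0$ yields convergence.

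For the converse direction, my strategy is a compactness argument on the space of random rooted graphs. Let $\mathcal{U}_\Delta$ denote the space of probability distributions on rooted graphs with maximum degree at most $\Delta$, equipped with the weak topology generated by the sampling probabilities $\mathbb{P}(\cdot, \alpha, r)$; this space is compact and metrizable, and the closure of $\{\rho(G) : G \text{ finite}\}$ in it is precisely the set of BS-limits. First I would define $f^*(L) := \lim_n f(G_n)$ for any sequence of finite graphs with $\rho(G_n) \to L$; well-definedness follows from the hypothesis, because if two such sequences gave different limits then interleaving them would produce a BS-convergent sequence on which $f$ oscillates.

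Next, I would verify continuity of $f^*$: for $L_m \to L$ in $\mathcal{U}_\Delta$, pick by diagonalization finite graphs $G_m$ with $\rho(G_m) \to L$ and $|f(G_m) - f^*(L_m)| < 1/m$. The hypothesis forces $f(G_m) \to f^*(L)$, so $f^*(L_m) \to f^*(L)$. On the compact metrizable space of BS-limits, uniform continuity of $f^*$ then gives, for each $\varepsilon > 0$, a radius $r$ and a $\delta > 0$ such that two BS-limits whose sampling probabilities $\mathbb{P}(\cdot, \alpha, r)$ (over all $\alpha$ of radius $\leq r$) agree within $\delta$ have $f^*$-values within $\varepsilon/2$. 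The estimator $g$ is built by sampling $k$ independent uniform vertices of $G$, forming empirical frequencies $\widehat{p}(\alpha)$ of the $r$-ball isomorphism types, and evaluating a continuous approximation of $f^*$ at these frequencies; a Chernoff--Hoeffding bound and a union bound (using that $\mathbb{B}_r$ is finite) ensure that for $k$ large the $\widehat{p}(\alpha)$ are within $\delta$ of the true $\mathbb{P}(G, \alpha, r)$ with probability at least $1 - \varepsilon$, while $f(G) = f^*(\rho(G))$ closes the loop.

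The main obstacle I expect is establishing both the well-definedness and the continuity of $f^*$; both rely crucially on the hypothesis being strong enough to rule out oscillation between locally indistinguishable graphs, and the diagonalization in the continuity step must be executed carefully so that the auxiliary sequence $(G_m)$ is genuinely BS-convergent (not merely close in some finite-radius statistic). Once $f^*$ is known to be continuous on a compact metrizable space whose topology is generated by the local sampling probabilities, the rest is a standard compactness-plus-concentration argument.
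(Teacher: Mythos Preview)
The paper does not prove this theorem at all; it is stated with attribution to Elek and a citation to \cite{elek2010parameter}, and is used only as a black box. So there is nothing in the paper to compare your argument against.

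On its own merits, your outline is the standard proof and is sound in structure. Two places to tighten. In the forward direction, your $O(\varepsilon)$ bound hides a constant depending on $\max|g|$, and $g$ depends on $\varepsilon$; you should remark that one may truncate $g$ to the range $[-M,M]$ of $f$ without spoiling the estimability inequality, so that $|f(G)-\mathbb{E}[g]|\le (1+2M)\varepsilon$ with a constant independent of $\varepsilon$. In the converse direction, the phrase ``evaluating a continuous approximation of $f^*$ at these frequencies'' needs a sentence of care: the empirical type distribution from $k$ samples need not lie in the set of Benjamini--Schramm limits (it need not be unimodular), so $f^*$ is not defined there. The clean fix is to let the estimator output $f^*$ of the \emph{nearest} point of the (closed, compact) BS-limit set inside the simplex of $r$-ball type frequencies; since $\rho(G)$ itself lies in that set and is within $\delta/2$ of the empirical vector with high probability, the nearest BS-limit is within $\delta$ of $\rho(G)$, and uniform continuity of $f^*$ finishes the job. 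With these two clarifications your argument is complete.
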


So Benjamini--Schramm convergence coincide with a very natural setting for estimating a graph parameter. Also, it is not hard to see that Theorem~\ref{thm:elek} is applicable for graph families that are closed under Benjamini-Schramm convergence, such as Eulerian graphs (we mean in the support of the random rooted graph we only have Eulerian subgraphs).

The main theorem of this paper is the following one.

\begin{Th} \label{th: main} The parameter $\frac{1}{v(G)}\ln \ee(G)$ is estimable for Eulerian graphs, that is, if $(G_n)_n$ is a Benjamini--Schramm convergent sequence of Eulerian graphs, then \\ 
$\lim_{n\to \infty}\frac{1}{v(G_n)}\ln \ee(G_n)$ exists. 
\end{Th}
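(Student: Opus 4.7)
By Theorem~\ref{thm:elek} it suffices to show that the parameter $f(G)=\frac{1}{v(G)}\ln\varepsilon(G)$ is estimable on the class of bounded-degree Eulerian graphs. The parameter is bounded on this class: Schrijver's inequality gives a positive lower bound, and $\varepsilon(G)\le 2^{|E(G)|}\le 2^{\Delta v(G)/2}$ gives the upper bound $f(G)\le\frac{\Delta}{2}\ln 2$.

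My plan is to construct, for each $r$, a bounded estimator $g_r:\mathbb{B}_r\to\mathbb{R}$ such that $\frac{1}{v(G)}\sum_{v\in V(G)} g_r(B_r(v))$ approximates $f(G)$ to within $o_{r\to\infty}(1)$ uniformly over bounded-degree Eulerian $G$. The candidate comes from an entropy chain-rule argument. Let $\sigma$ be a uniform Eulerian orientation of $G$ and let $\pi$ be a uniformly random linear order on $V(G)$. For each vertex $v$ set $E_v^\pi=\{uv\in E(G):\pi(u)>\pi(v)\}$; the sets $\{E_v^\pi\}_v$ partition $E(G)$, and the chain rule for entropy gives
\[
\ln \varepsilon(G) \;=\; H(\sigma) \;=\; \sum_{v\in V(G)} H\!\left(\sigma|_{E_v^\pi}\,\Big|\,\sigma|_{\bigcup_{u:\,\pi(u)<\pi(v)} E_u^\pi}\right).
\]
Taking expectation over $\pi$ and symmetrising, one would set $g_r(B_r(v))$ to be the analogous conditional entropy computed after truncating $G$ to the ball of radius $r$ around $v$. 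Convergence of the resulting estimator then reduces to a locality statement: the per-vertex conditional entropy should be determined, up to vanishing error as $r\to\infty$, by the rooted isomorphism class of $B_r(v)$ together with the restriction of $\pi$ to that neighbourhood.

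The main obstacle is proving this locality. Uniform Eulerian orientations are not a standard nearest-neighbour Gibbs measure: reversing the orientation along any directed cycle preserves Eulerianness, so constraints can propagate arbitrarily far. To circumvent this I would introduce the smoothed partition function
\[
Z_G(\lambda) \;=\; \sum_{\sigma} \lambda^{\sum_v|\mathrm{in}_\sigma(v)-\mathrm{out}_\sigma(v)|}, \qquad \lambda\in[0,1],
\]
summed over all orientations $\sigma$ of $G$, so that $Z_G(0)=\varepsilon(G)$ while $Z_G(1)=2^{|E(G)|}$ corresponds to a product measure. Correlation decay for the associated Gibbs measure is trivial at $\lambda=1$; the task is to propagate estimability of $\frac{1}{v(G)}\ln Z_G(\lambda)$ down to $\lambda=0$ via a complex zero-free region for $Z_G$ along a path in $[0,1]$, then apply Barvinok-style interpolation combined with the kind of zero-free/Benjamini--Schramm continuity statement developed by Csikv\'ari and Frenkel. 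Establishing such a complex zero-free strip, tailored to the ``exact-half'' signature inherent to Eulerian orientations, is expected to be the hardest step and will likely require Lee--Yang or cluster-expansion arguments specific to the Eulerian setting.
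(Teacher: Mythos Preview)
Your proposal is a plan with two branches, and both carry the gaps you yourself flag; neither is a proof as written. The entropy chain-rule route stalls at exactly the point you identify: the conditional law of $\sigma|_{E_v^\pi}$ given the earlier edges is \emph{not} a local quantity, because flipping $\sigma$ along an arbitrarily long directed cycle preserves Eulerianness. No spatial-mixing statement of the required uniformity is known here, and the analogous locality fails for the closely related perfect-matching count, where estimability is in fact false; so without a new idea this branch does not close.

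Your second branch---interpolate between $\varepsilon(G)$ and an easy point, prove complex zero-freeness, then invoke Csikv\'ari--Frenkel style continuity---is precisely the paper's strategy in spirit, and the paper supplies the step you leave open. The decisive difference is the choice of interpolating polynomial. Instead of your $Z_G(\lambda)$, the paper uses the subgraph counting polynomial
\[
P_G(z)=\sum_{A\subseteq E}\Bigl(\prod_{v}s^{(v)}_{d_A(v)}\Bigr) z^{2|A|},\qquad
s^{(v)}_k=\frac{\binom{d(v)}{d(v)/2}\binom{d(v)/2}{k/2}}{2^{d(v)/2}\binom{d(v)}{k}}\ \text{($k$ even, else $0$)},
\]
for which a gauge-transformation identity gives $P_G(1)=\varepsilon(G)$. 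The point of this particular encoding is that the per-vertex key polynomial factors as $2^{-d(v)/2}\binom{d(v)}{d(v)/2}(1+z^2)^{d(v)/2}$, so Wagner's Lee--Yang theorem forces \emph{all} zeros of $P_G$ onto the unit circle. That is much stronger than a zero-free strip and comes for free once the right polynomial is written down; for your $Z_G(\lambda)$ no such statement is available, and you would still have to manufacture one.

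Finally, the evaluation point $z=1$ lies on the circle where the zeros live, so a zero-free region around it does not exist. The paper does not try to produce one. Instead it uses that the coefficients of $P_G$ are nonnegative: this makes $u\mapsto \tfrac{1}{v(G)}\ln P_G(u)$ monotone with a uniform modulus of continuity on $(0,\infty)$, so the limits $p_L(u)$ for $u\neq 1$ (which follow from weak convergence of the root measures $\mu_{G_n}$ on the unit circle) can be squeezed to $u=1$. This positivity-plus-continuity endgame, rather than a Barvinok disk, is what finishes the argument, and it is specific to the subgraph-counting encoding.
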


We remark that for many graph parameter $P(G)$ proving the convergence of \\
$\lim_{n\to \infty}\frac{1}{v(G_n)}\ln P(G_n)$
for subgraphs of lattice graphs is often an easy problem using Fekete's lemma
This is not completely the case for the number of Eulerian orientations as it is sensitive for edge deletion ruining the Eulerian property of the graph. For instance, Baxter \cite{baxter1969f} elaborate on the role of the boundary condition in case of triangle lattice. Theorem~\ref{th: main} shows that the boundary condition is not important as long as the finite graphs are Eulerian. The situation changes dramatically if the boundary condition involves pre-directing some of the edges. For instance, Korepin and Zinn-Justin \cite{korepin2000thermodynamic} showed that the domain wall boundary condition changes the limit value on the square lattice. In fact, it is rather easy to construct examples with pre-directed edges that decreases the number of Eulerian orientations exponentially, see Figure~\ref{fig:boudary_matter} for an example.
Note that Theorem~\ref{th: main} applies for graph sequences too that are not lattice graphs and we cannot even speak about boundary condition at all. 

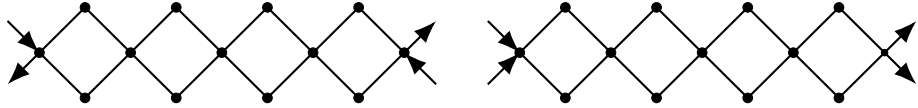
\begin{figure}
\centering
\begin{subfigure}{.4\textwidth}
\centering
    \begin{tikzpicture}[scale=0.6]
        \foreach \x in {0,1,2,3} 
        {
            \draw[thick] (0+2*\x,0) -- (1+2*\x,1) -- (2+2*\x,0) -- (1+2*\x,-1) -- (0+2*\x,0) ;
            \filldraw[black] (2*\x,0) circle (3pt);
            \filldraw[black] (2*\x+1,1) circle (3pt);
            \filldraw[black] (2*\x+1,-1) circle (3pt);
        }
        \filldraw[black] (8,0) circle (3pt);
        \draw[thick,-{Latex[length=3mm]}] (-0.7,0.7) -- (0,0) ;
        \draw[thick,{Latex[length=3mm]}-] (-0.7,-0.7) -- (0,0) ;
    
        \draw[thick,{Latex[length=3mm]}-] (8.7,0.7) -- (8,0) ;
        \draw[thick,-{Latex[length=3mm]}] (8.7,-0.7) -- (8,0) ;
    \end{tikzpicture}        
\end{subfigure}%
\begin{subfigure}{.4\textwidth}
\centering
    \begin{tikzpicture}[scale=0.6]
        \foreach \x in {0,1,2,3} 
        {
            \draw[thick] (0+2*\x,0) -- (1+2*\x,1) -- (2+2*\x,0) -- (1+2*\x,-1) -- (0+2*\x,0) ;
            \filldraw[black] (2*\x,0) circle (3pt);
            \filldraw[black] (2*\x+1,1) circle (3pt);
            \filldraw[black] (2*\x+1,-1) circle (3pt);
        }
        \filldraw[black] (8,0) circle (2pt);
        \draw[thick,-{Latex[length=3mm]}] (-0.7,0.7) -- (0,0) ;
        \draw[thick,-{Latex[length=3mm]}] (-0.7,-0.7) -- (0,0) ;
    
        \draw[thick,{Latex[length=3mm]}-] (8.7,0.7) -- (8,0) ;
        \draw[thick,{Latex[length=3mm]}-] (8.7,-0.7) -- (8,0) ;
    \end{tikzpicture}        
\end{subfigure}
\caption{Two sequences of graphs with different boundary conditions but with the same Benjamini-Schramm limit. The first sequence (as on the left) consists of $n$ cycles with balanced boundary condition at the ends. The second sequence (as on the right) consists also $n$ cycles but at the two end we have $2$ surplus and $2$ deficit. In the first case it is clear that the number of Eulerian orientations is $2^n$, while in the second case it is constant $1$. This example shows that $\tfrac{1}{v(G)}\ln\varepsilon(G)$ is not an estimable parameter for convergent graph sequences with  boundary conditions given by pre-directed edges.}
\label{fig:boudary_matter}
\end{figure}

\section{Proof strategy and preliminaries}

In this section we collect the necessary tools to prove Theorem~\ref{th: main}. We essentially rely on two tools. One of them is the so-called subgraph counting polynomial (that in turn relies on the so-called gauge transformation) and the other one is a Lee-Yang-type theorem on the zeros of a certain polynomial.

The idea of the proof is to encode the number of Eulerian orientations of a graph $G$ as a special evaluation of a certain polynomial. This polynomial will have the property that all its zeros are on the unit circle on the complex plane. For a Benjamini--Schramm convergent graph sequence $(G_n)_n$ the distribution of these zeros will then converge to a limit measure on the unit circle. An extra difficulty of this approach that this special evaluation is exactly the evaluation at $1$, so in principle it can occur that the zeros of the polynomials accumulate at $1$. We overcome this difficulty by a continuity argument using one more special property of the arising polynomials: their coefficients are non-negative. Though this plan might be vague at this moment it will be more clear after the next sections.

\subsection{Subgraph counting polynomial}

For a moment let us assume that $G$ is a $d$-regular graph, and let us introduce the so-called subgraph counting polynomial
$$F_G(x_0,\dots ,x_d)=\sum_{A\subseteq E}\left(\prod_{v\in V}x_{d_A(v)}\right),$$
where $d_A(v)$ is the degree of the vertex $v$ in the graph $G_A=(V,A)$.
And a bit more generally, we can also define
$$F_G(x_0,\dots ,x_d|z)=\sum_{A\subseteq E}\left(\prod_{v\in V}x_{d_A(v)}\right)z^{2|A|}=F_G(x_0,x_1z,x_2z,...,x_dz^d).$$
As an example we give the subgraph counting polynomial $F_{K_5}(x_0,x_1,x_2,x_3,x_4)$ of the complete graph $K_5$ on $5$ vertices. The first term corresponds to the empty subgraph, the last term corresponds to the graph itself.

 \begin{align*}
 &\  x_{0}^{5} + 10 x_{0}^{3} x_{1}^{2} + 15 x_{0} x_{1}^{4} + 30 x_{0}^{2} x_{1}^{2} x_{2} + 30 x_{1}^{4} x_{2} + 60 x_{0} x_{1}^{2} x_{2}^{2} + 10 x_{0}^{2} x_{2}^{3} + 70 x_{1}^{2} x_{2}^{3} + 15 x_{0} x_{2}^{4} \\ 
 &+ 12 x_{2}^{5} + 20 x_{0} x_{1}^{3} x_{3} + 60 x_{1}^{3} x_{2} x_{3} + 60 x_{0} x_{1} x_{2}^{2} x_{3} + 120 x_{1} x_{2}^{3} x_{3} + 60 x_{1}^{2} x_{2} x_{3}^{2} + 30 x_{0} x_{2}^{2} x_{3}^{2} + 70 x_{2}^{3} x_{3}^{2} \\
 & + 60 x_{1} x_{2} x_{3}^{3} + 5 x_{0} x_{3}^{4} + 30 x_{2} x_{3}^{4} + 5 x_{1}^{4} x_{4} + 30 x_{1}^{2} x_{2}^{2} x_{4} + 15 x_{2}^{4} x_{4} + 60 x_{1} x_{2}^{2} x_{3} x_{4} + 60 x_{2}^{2} x_{3}^{2} x_{4} \\
 &+ 20 x_{1} x_{3}^{3} x_{4} + 15 x_{3}^{4} x_{4} + 10 x_{2}^{3} x_{4}^{2} + 30 x_{2} x_{3}^{2} x_{4}^{2} + 10 x_{3}^{2} x_{4}^{3} + x_{4}^{5}.
 \end{align*}

The following theorem connects the number of Eulerian orientations with the subgraph counting polynomial. For a self-contained proof, see the Appendix.
 
\begin{Th}[Borb\'enyi and Csikv\'ari \cite{borbenyi2020counting}] \label{Euler orientations, regular}
For an even number $d$ let $\underline{s}=(s_0,s_1,\dots ,s_d)$ be defined as follows.
$$s_k=\begin{cases} \frac{\binom{d}{d/2}\binom{d/2}{k/2}}{2^{d/2}\binom{d}{k}} & \mbox{if}\ \ k\ \ \mbox{is even}, \\
0  & \mbox{if}\ \ k\ \ \mbox{is odd}.
\end{cases}$$
Then $F_G(s_0,\dots ,s_d)$ counts the number of Eulerian orientations of a $d$--regular graph $G$.	
\end{Th}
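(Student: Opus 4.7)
The plan is to establish $\ee(G)=F_G(s_0,\dots,s_d)$ through a holographic (gauge) transformation that relates two tensor-network contractions. Both sides admit a uniform description as a contraction over half-edge variables. For each half-edge $(v,e)$ introduce a binary variable $\tau_{v,e}\in\{0,1\}$; then
$$\ee(G)=\sum_\tau \prod_{e=uv}E^{\mathrm{or}}(\tau_{u,e},\tau_{v,e})\prod_v V^{\mathrm{or}}_v\bigl((\tau_{v,e})_{e\ni v}\bigr),$$
with edge tensor $E^{\mathrm{or}}(a,b)=\mathbb{1}[a+b=1]$ (a consistent orientation of each edge) and vertex tensor $V^{\mathrm{or}}_v(\tau)=\mathbb{1}[\sum_{e\ni v}\tau_{v,e}=d/2]$ (balance at $v$). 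Similarly, $F_G(x_0,\dots,x_d)$ is the analogous contraction with $E^{\mathrm{sub}}(a,b)=\mathbb{1}[a=b]$ and $V^{\mathrm{sub}}_v(\sigma)=x_{\sum_{e\ni v}\sigma_{v,e}}$.

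For any invertible $M\in\C^{2\times 2}$, inserting $\delta=MM^{-1}$ on every half-edge leaves the contraction invariant but replaces the edge tensor by $\tilde E=M^{T}E\,M$ and the vertex tensor by $\tilde V_v(\sigma)=\sum_\tau V_v(\tau)\prod_i(M^{-1})_{\sigma_i,\tau_i}$. My key choice is $M=\tfrac{1}{\sqrt{2}}\bigl(\begin{smallmatrix}1&-i\\ 1&i\end{smallmatrix}\bigr)$, for which a direct $2\times 2$ calculation gives $M^{T}E^{\mathrm{or}}M=I=E^{\mathrm{sub}}$. After this gauge the edge tensors match the subgraph ones, so it suffices to show that $\tilde V^{\mathrm{or}}_v(\sigma)=s_{\sum_i\sigma_i}$.

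Writing $\mathbb{1}[\sum\tau_i=d/2]=[z^{d/2}]\prod_i z^{\tau_i}$ and substituting the entries of $M^{-1}$, a short calculation gives
$$\tilde V^{\mathrm{or}}_v(\sigma)=\frac{i^k}{2^{d/2}}\,[z^{d/2}]\,(1+z)^{d-k}(1-z)^k,\qquad k:=\textstyle\sum_i\sigma_i,$$
a function of $k$ only. Setting $z=e^{i\theta}$ and using $1+e^{i\theta}=2\cos(\theta/2)e^{i\theta/2}$ and $1-e^{i\theta}=-2i\sin(\theta/2)e^{i\theta/2}$ reduces the coefficient to a Wallis-type integral involving $\cos^{d-k}\phi\,\sin^k\phi$. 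It vanishes for odd $k$ by the symmetry $\phi\mapsto\pi-\phi$, and for even $k$ evaluates to $2^{d/2}(k-1)!!(d-k-1)!!/d!!$, which a routine factorial manipulation rewrites as $\binom{d}{d/2}\binom{d/2}{k/2}/(2^{d/2}\binom{d}{k})=s_k$. The main obstacle is precisely this final identification: matching the double-factorial output of the coefficient extraction to the asymmetric binomial form of $s_k$ while correctly tracking powers of $i$ and signs. Every other step is a formal algebraic manipulation on the tensor network.
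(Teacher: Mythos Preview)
Your proof is correct and follows essentially the same holographic/gauge-transformation strategy as the paper's Appendix proof: both encode $\ee(G)$ and $F_G$ as tensor-network contractions over half-edge variables and pass between them via the same $2\times 2$ change of basis that turns the orientation edge tensor $\bigl(\begin{smallmatrix}0&1\\1&0\end{smallmatrix}\bigr)$ into the identity. The only difference is in evaluating the transformed vertex function: the paper rewrites the resulting sum as the coefficient of $x^k$ in $(1-x)^{d/2}(1+x)^{d/2}=(1-x^2)^{d/2}$ and reads off $s_k$ directly, whereas you express it as $[z^{d/2}](1+z)^{d-k}(1-z)^k$ and evaluate via a contour/Wallis integral --- both routes yield $s_k$.
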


If $G$ is not necessarily $d$-regular, then the above definitions have to be changed as follows. For each vertex $v$ we introduce a set of variables $x^{(v)}_0,x^{(v)}_1,\dots ,x^{(v)}_{d(v)}$. Then the subgraph counting function is defined as 
$$F_G\left(\left(x^{(v)}_0,x^{(v)}_1,\dots ,x^{(v)}_{d(v)}\right)_{v\in V}\right)=\sum_{A\subseteq E}\left(\prod_{v\in V}x^{(v)}_{d_A(v)}\right),$$
and
$$F_G\left(\left(x^{(v)}_0,x^{(v)}_1,\dots ,x^{(v)}_{d(v)}\right)_{v\in V}|z\right)=\sum_{A\subseteq E}\left(\prod_{v\in V}x^{(v)}_{d_A(v)}\right)z^{2|A|}.$$
The following generalization of Theorem~\ref{Euler orientations, regular} is also true.

\begin{Th}[Borb\'enyi and Csikv\'ari \cite{borbenyi2020counting}] \label{Euler orientations, non-regular}
Let $G$ be an Eulerian graph. For each vertex $v\in V$ let us introduce the vector $\underline{s}^{(v)}=(s^{(v)}_0,s^{(v)}_1,\dots ,s^{(v)}_{d(v)})$, where 
$$s^{(v)}_k=\begin{cases} \frac{\binom{d(v)}{d(v)/2}\binom{d(v)/2}{k/2}}{2^{d(v)/2}\binom{d(v)}{k}} & \mbox{if}\ \ k\ \ \mbox{is even}, \\
0  & \mbox{if}\ \ k\ \ \mbox{is odd}.
\end{cases}$$
Then $F_G\left(\left(s^{(v)}_0,\dots ,s^{(v)}_{d(v)}\right)_{v\in V}\right)$ counts the number of Eulerian orientations of the graph $G$.	
\end{Th}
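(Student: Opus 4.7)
The natural plan is to encode $\varepsilon(G)$ as a Holant and apply a gauge (holographic) transformation on each edge, using the $2\times 2$ Hadamard matrix, to rewrite it as the subgraph sum $F_G$. This is the non-regular analogue of the argument for Theorem~\ref{Euler orientations, regular}; the only real novelty is that the local weights are now permitted to vary from vertex to vertex.

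First I would encode an orientation of $G$ by placing at each edge $e=\{u,v\}$ a pair of bits $(y_e^u,y_e^v)$ subject to $y_e^u+y_e^v=1$ (exactly one endpoint sees the edge as incoming). The Eulerian condition at $v$ then reads $\sum_{e\ni v}y_e^v=d(v)/2$, and
\[
\varepsilon(G)=\sum_{y}\prod_{v}\bigl[\textstyle\sum_{e\ni v}y_e^v=\tfrac{d(v)}{2}\bigr]\prod_{e=\{u,v\}}E_{y_e^u,y_e^v},\qquad E=\begin{pmatrix}0&1\\1&0\end{pmatrix}.
\]
The key step is to diagonalise $E$: with the symmetric Hadamard matrix $M=\tfrac{1}{\sqrt 2}\begin{pmatrix}1&1\\1&-1\end{pmatrix}$ and $D=\mathrm{diag}(1,-1)$ one has $E=MDM^{T}$. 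Substituting and absorbing the $M$-factors into the vertex tensors produces a sum indexed by a \emph{single} bit $c_e$ per edge; writing $A=\{e:c_e=1\}$ gives
\[
\varepsilon(G)=\sum_{A\subseteq E}(-1)^{|A|}\prod_{v\in V}\tilde f_v(d_A(v)),\qquad \tilde f_v(k)=2^{-d(v)/2}\,[z^{d(v)/2}](1-z)^{k}(1+z)^{d(v)-k},
\]
so that after the transformation $\tilde f_v$ depends only on how many edges of $A$ touch $v$.

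The rest is a Krawtchouk-style computation of $\tilde f_v$. For $P(z)=(1-z)^{k}(1+z)^{d(v)-k}$ the palindromic relation $z^{d(v)}P(1/z)=(-1)^{k}P(z)$ forces the middle coefficient to vanish whenever $k$ is odd, while the generating-function identity
\[
\sum_{k}\binom{d}{k}[z^{d/2}](1-z)^{k}(1+z)^{d-k}w^{k}=[z^{d/2}]\bigl((1+w)+z(1-w)\bigr)^{d}=\binom{d}{d/2}(1-w^{2})^{d/2}
\]
(with $d=d(v)$) extracts, for $k=2m$,
\[
\tilde f_v(k)=(-1)^{k/2}\,\frac{\binom{d(v)}{d(v)/2}\binom{d(v)/2}{k/2}}{2^{d(v)/2}\binom{d(v)}{k}}=(-1)^{k/2}\,s^{(v)}_{k}.
\]

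Finally the signs cancel globally. Only $A$ with every $d_A(v)$ even contribute, and for such $A$ one has $\sum_v d_A(v)/2=|A|$, so $(-1)^{|A|}\prod_v(-1)^{d_A(v)/2}=1$. Combining,
\[
\varepsilon(G)=\sum_{A\subseteq E}\prod_{v\in V}s^{(v)}_{d_A(v)}=F_G\!\left(\bigl(s^{(v)}_0,\dots,s^{(v)}_{d(v)}\bigr)_{v\in V}\right),
\]
which is the claim. I anticipate the only non-formal obstacle to be the closed-form evaluation of $\tilde f_v(k)$; once that Krawtchouk identity is in hand the gauge transformation and sign cancellation are automatic, and Theorem~\ref{Euler orientations, regular} drops out as the uniform-degree specialisation.
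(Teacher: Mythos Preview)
Your proof is correct and follows essentially the same holographic/gauge-transformation strategy as the paper's appendix proof of Theorem~\ref{Euler orientations, regular} (which extends verbatim to the non-regular case). The only difference is cosmetic: the paper chooses the complex gauge
\[
G_1=\tfrac{1}{\sqrt 2}\begin{pmatrix}1&1\\ i&-i\end{pmatrix},\qquad G_2=\tfrac{1}{\sqrt 2}\begin{pmatrix}1&1\\ -i&i\end{pmatrix},
\]
which turns the edge matrix $E$ into the identity rather than $\mathrm{diag}(1,-1)$; the resulting $i^{k}$ factor at each vertex then absorbs the $(-1)^{k/2}$ sign automatically, so the transformed vertex weight equals $s^{(v)}_{k}$ on the nose and no global sign-cancellation step is needed. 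Your real Hadamard gauge produces the extra $(-1)^{|A|}$ on edges and $(-1)^{d_A(v)/2}$ at vertices, which you then cancel via the handshake identity---a perfectly valid and arguably more transparent variant of the same argument.
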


\subsection{A Lee-Yang-type theorem: Wagner's subgraph counting technique}

In this section we will recall some theorem of Wagner (Theorem~3.2 of \cite{wagner2009weighted}) about the location of zeros of $F_G(x_0,\dots,x_d|z)$.  For any fixed vertex $v$ and $x^{(v)}_0,\dots,x^{(v)}_{d(v)}$ let us define the following \emph{key-polynomial}
\[
    K_v(x^{(v)}_0,\dots,x^{(v)}_d|z)=\sum_{k=0}^{d(v)} {d(v)\choose k} x^{(v)}_k z^k.
\]

\begin{Th}[Wagner \cite{wagner2009weighted}]\label{thm:wagner}
If for any vertex $v$ the polynomial $K_v(x^{(v)}_0,\dots,x^{(v)}_d|z)$ has no complex zero in the open disk of radius $\kappa$ around 0, then $F_G\left(\left(x^{(v)}_0,x^{(v)}_1,\dots ,x^{(v)}_{d(v)}\right)_{v\in V}|z\right)$ has no complex zero in the open disk of radius $\kappa$ around 0 for any $d$-regular graph $G$.

If for any vertex $v$ the polynomial $K_v(x^{(v)}_0,\dots,x^{(v)}_d|z)$ has no complex zero in the complement of a closed disk of radius $\kappa$ around 0, then $F_G\left(\left(x^{(v)}_0,x^{(v)}_1,\dots ,x^{(v)}_{d(v)}\right)_{v\in V}|z\right)$ has no complex zero in the complement of a closed disk of radius $\kappa$ around 0 for any graph $G$.

In particular, if  for any vertex $v$ the polynomial $K_v(x^{(v)}_0,\dots,x^{(v)}_d|z)$ has only zeros on the circle of radius $\kappa$ around 0, then $F_G\left(\left(x^{(v)}_0,x^{(v)}_1,\dots ,x^{(v)}_{d(v)}\right)_{v\in V}|z\right)$ has complex zeros only on the circle of radius $\kappa$ for any  graph $G$.
\end{Th}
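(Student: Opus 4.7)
The plan is to run the multivariate polynomial (gauge-transformation) method: introduce an auxiliary multiaffine polynomial in edge variables whose zero-free locus is controlled by two classical zero-preservation tools (Grace--Walsh--Szeg\H{o} and Asano contraction), then specialize at the end. Concretely, consider the multiaffine edge polynomial
\[
\hat F_G\bigl((y_e)_{e\in E}\bigr) = \sum_{A \subseteq E}\Bigl(\prod_{v\in V} x^{(v)}_{d_A(v)}\Bigr) \prod_{e\in A} y_e,
\]
so that $F_G(\ldots\mid z) = \hat F_G(z^2,\ldots,z^2)$. It therefore suffices to show that $\hat F_G$ is nonvanishing on the polydisk $\{|y_e| < \kappa^2 : e\in E\}$ (respectively on the complement of the closed polydisk of radius $\kappa^2$); the substitution $y_e=z^2$ then delivers the theorem.

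First I would polarize each univariate key polynomial. For each vertex $v$, introduce the symmetric multiaffine polynomial
\[
\tilde K_v\bigl((z_{e,v})_{e\ni v}\bigr) = \sum_{S \subseteq E(v)} x^{(v)}_{|S|} \prod_{e \in S} z_{e,v}
\]
in $d(v)$ half-edge variables; its diagonal specialization $z_{e,v}=z$ equals $K_v(x^{(v)}\mid z)$. Since any open disk and the complement of any closed disk is a circular region, the Grace--Walsh--Szeg\H{o} coincidence theorem yields that $\tilde K_v$ is zero-free on the corresponding polydisk whenever $K_v$ is zero-free on the disk. Forming $\Phi := \prod_v \tilde K_v$, whose factors use disjoint variable sets, produces a polynomial in all half-edge variables that is nonvanishing on $\{|z_{e,v}|<\kappa\}$ (resp.\ on its complement).

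Next I would eliminate the two half-edge variables at each edge by an Asano contraction. For edge $e=uv$, write $\Phi = A + B z_{e,u} + C z_{e,v} + D z_{e,u} z_{e,v}$ with $A,B,C,D$ polynomials in the remaining variables, and set $AC_e \Phi := A + D y_e$. Asano's classical contraction lemma states that zero-freeness of $\Phi$ on $\{|z_{e,u}|<\kappa,\, |z_{e,v}|<\kappa\}$ propagates to zero-freeness of $AC_e \Phi$ on $\{|y_e|<\kappa^2\}$, and the analogous exterior statement holds. A short combinatorial check shows that $AC_e$ retains precisely those terms for which $e \in S_u \cap S_v$ or $e \notin S_u \cup S_v$; iterating over all edges therefore produces exactly $\hat F_G$, since the surviving local data at each vertex reads $S_v = A \cap E(v)$ with $|S_v|=d_A(v)$.

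The main subtlety lies in Asano's lemma itself and the open/closed dichotomy. Asano's lemma is classically stated for closed disks and their complements, so in the open-disk case I would apply it to any closed subdisk of radius $\kappa' < \kappa$ and let $\kappa' \uparrow \kappa$; this costs nothing since the coefficient polynomial $AC_e \Phi$ depends continuously on the substitutions used. Once the first two statements are in hand, the ``only on the circle'' assertion is immediate: any zero of $F_G$ would have to lie simultaneously in the open exterior and the open interior of the circle of radius $\kappa$, which is impossible.
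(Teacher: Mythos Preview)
The paper does not supply its own proof of this statement; it is quoted from Wagner and used as a black box in Section~3. Your argument---polarize each key polynomial via Grace--Walsh--Szeg\H{o}, multiply over vertices, then perform Asano contractions edge by edge to recover the multiaffine edge polynomial $\hat F_G$---is precisely Wagner's original method, so there is no alternative approach within the present paper to compare against.

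Two minor technical points are worth tightening. First, Grace--Walsh--Szeg\H{o} on a non-convex circular region (the complement of a closed disk) requires the symmetric multiaffine polynomial to have full degree, i.e.\ $x^{(v)}_{d(v)}\neq 0$; the analogous nondegeneracy $A\neq 0$ or $D\neq 0$ enters Asano's lemma as well. These hypotheses are not stated here but hold automatically in the paper's application, where $s^{(v)}_0=s^{(v)}_{d(v)}\neq 0$. Second, your final sentence is phrased backwards: a zero of $F_G$ off the circle of radius $\kappa$ would lie in the open interior \emph{or} the open exterior, and the first two parts rule out each region separately---so every zero must lie on the circle. With those corrections the argument is complete.
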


\section{Proof of Theorem~\ref{th: main}}

In this section we complete the proof of Theorem~\ref{th: main}.

For a graph $G$ let us introduce the polynomial
$$P_G(z)=F_G\left(\left(s^{(v)}_0,s^{(v)}_1,\dots ,s^{(v)}_{d(v)}\right)_{v\in V}\bigg|\  z\right),$$
where
$$s^{(v)}_k=\begin{cases} \frac{\binom{d(v)}{d(v)/2}\binom{d(v)/2}{k/2}}{2^{d(v)/2}\binom{d(v)}{k}} & \mbox{if}\ \ k\ \ \mbox{is even}, \\
0  & \mbox{if}\ \ k\ \ \mbox{is odd}.
\end{cases}$$

\begin{Ex}
For the complete graph $K_5$ on $5$ vertices we have
$$P_{K_5}(z)=F_{K_5}\left(\frac{3}{2},0,\frac{1}{2},0,\frac{3}{2} \bigg| \ z\right)=\frac{243}{32}z^{20} + \frac{45}{16}z^{14} + \frac{45}{32}z^{12}   +\frac{3}{8}z^{10} + \frac{45}{32}z^8 + \frac{45}{16}z^6 + \frac{243}{32}.$$
The following picture depicts its zeros.
\begin{figure}[h]
\includegraphics[width=8cm,height=8cm]{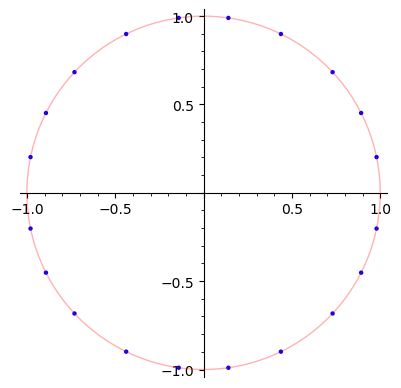}
\caption{The zeros of $P_{K_5}(z)$.}
\end{figure}
\end{Ex}

By Theorem~\ref{Euler orientations, non-regular} we know that $P_G(1)=\ee(G)$. Observe that the polynomials
$$K_v(s^{(v)}_0,\dots,s^{(v)}_d|z)=\sum_{k=0}^{d(v)} {d(v)\choose k} s^{(v)}_k z^k=2^{-d(v)/2}\binom{d(v)}{d(v)/2}(1+z^2)^{d(v)/2},$$
that is, all its zeros lie on the unit circle. By Theorem~\ref{thm:wagner} it implies that the zeros of $P_G(z)$ also lie on the unit circle.
If $G$ has $m$ edges, then the degree of the polynomial $P_G(z)$ is $2m$ and we can factorize it as follows:
$$P_G(z)=2^{-m}\prod_{v\in V}\binom{d(v)}{d(v)/2}\prod_{i=1}^{2m}(z-\rho_i),$$
where $|\rho_i|=1$ for $i=1,\dots ,2m$. Let us introduce the following measure on the complex plane:
$$\mu_G=\frac{1}{2m}\sum_{i=1}^{2m}{\delta}_{\rho_i},$$
where $\delta_s$ is the Dirac-measure supported on $s\in \mathbb{C}$. If $k$ is a fixed non-negative integer, then 
$$\int z^k\ d\mu_{G}(z)=\frac{1}{2m}\sum_{i=1}^k\rho_i^k.$$
If $P_G(z)=\sum_{k=0}^{2m}a_kz^k$, then the integral $\int z^k\ d\mu_{G}(z)$ is determined by the numbers $a_{2m},a_{2m-1},\dots ,a_{2m-k}$
which in turn are determined by the $k$-neighborhood statistics of the graph $G$. It turns out that it implies that if $(G_n)_n$ is a Benjamini--Schramm convergent graph sequence, then the sequence $\int z^k\ d\mu_{G_n}(z)$. The precise details of this argument is given in the paper \cite{csikvari2016benjamini}. A measure sequence $\mu_n$ on $\mathbb{C}$ is convergent if for any fixed $k$ and $\ell$, the sequence $\int z^k\overline{z}^{\ell}\ d\mu_n(z)$ is convergent. Note that $\mu_{G_n}$ is supported on the unit circle, this is equivalent with the convergence of  $\int z^k\ d\mu_{G_n}(z)$. Whence $\mu_{G_n}$ is weakly convergent. 

Now let us fix some $u\neq 1$ positive real number and consider $\frac{1}{v(G_n)}\ln P_{G_n}(u)$. We have
$$\frac{1}{v(G_n)}\ln P_{G_n}(u)=\frac{1}{v(G_n)}\ln \left(2^{-e(G_n)}\prod_{v\in V}\binom{d_{G_n}(v)}{d_{G_n}(v)/2}\right)+\frac{2e(G_n)}{v(G_n)}\int \ln |u-z| \ d\mu_{G_n}(z).$$
Since $\mu_{G_n}$ are supported on the unit circle we get that $h_u(z)=\ln|u-z|$ is a continuous function on an open neighborhood of the unit circle. This gives that the sequence 
$\frac{1}{v(G_n)}\ln P_{G_n}(u)$ exists for $u\neq 1$ positive real number.

Let us introduce 
$$p_L(u)=\lim_{n\to \infty}\frac{1}{v(G_n)}\ln P_{G_n}(u).$$
The final observation is that $p_L(u)$ is a monotone increasing continuous function. This is because $P_G(z)$ has only non-negative coefficients and so if $u_1<u_2$, then
$$P_G(u_1)\leq P_G(u_2)\leq \left(\frac{u_2}{u_1}\right)^{2m}P_G(u_1),$$
whence
$$\frac{1}{v(G)}\ln P_G(u_1)\leq \frac{1}{v(G)}\ln P_G(u_2)\leq \frac{1}{v(G)}\ln P_G(u_1)+\frac{2e(G)}{v(G)}\ln \left(\frac{u_2}{u_1}\right).$$
This implies that 
$$p_L(u_1)\leq p_L(u_2)\leq p_L(u_1)+\Delta \ln \left(\frac{u_2}{u_1}\right)$$
showing that $p_L(u)$ is a continuous and monotone increasing function. In particular, we can introduce $p_L(1)=\lim_{u\to 1}p_L(u)$ and get that
$$\lim_{n\to \infty}\frac{1}{v(G_n)}\ln P_{G_n}(1)=p_L(1),$$
that is, $\lim_{n\to \infty}\frac{1}{v(G_n)}\ln \ee(G_n)$ exists.

\section{Concluding remarks}

In this last section we give some remarks on the methods used in this paper.

\subsection{Large girth graphs}
In this section we determine the limit of $\frac{1}{v(G_n)}\ln \ee(G_n)$ if $(G_n)_n$ is a large girth sequence, that is, $g(G_n)\to \infty$. This limit was determined by Vergnas \cite{vergnas1990upper} building on the work of Schrijver \cite{schrijver1983bounds} if $(G_n)_n$ is a sequence of $d$-regular graphs. Indeed, Schrijver proved the lower bound
$$\frac{1}{v(G)}\ln \ee(G)\geq \ln\left(2^{-d/2}\binom{d}{d/2}\right),$$
and Vergnas proved a matching upper bound in terms of the maximal number of pairwise edge-disjoint cycles which is at most $\frac{dv(G)}{g}$ if $g$ the length of the shortest cycle. 

Here we directly rely on the proof method we did in the previous section.

\begin{Th} \label{large-girth}
Let $(G_n)_n$ be a Benjamini--Schramm convergent sequence of Eulerian graphs with maximum degree $\Delta$ and  girth $g(G_n)\to \infty$. Let
$$t_k:=\lim_{n\to \infty}\frac{|\{v\ |\ d_{G_n}(v)=k\}|}{v(G_n)}\  \  \ (k=0,\dots ,\Delta),$$
then
$$\lim_{n\to \infty}\frac{1}{v(G_n)}\ln \ee(G_n)=\sum_{k=0}^{\Delta}t_k\ln \left(2^{-k/2}\binom{k}{k/2}\right).$$
\end{Th}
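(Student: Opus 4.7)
The plan is to reuse the polynomial $P_G(z)$ and the unit-circle measure $\mu_G$ from the proof of Theorem~\ref{th: main}, and to identify the weak limit of $\mu_{G_n}$ explicitly in the large girth regime. The starting observation is that $P_G(z)$ has a very sparse coefficient sequence whenever $g(G)$ is large. Since $s^{(v)}_k=0$ for odd $k$, only even subgraphs contribute to
$$P_G(z)=\sum_{A\in \mathcal{E}(G)}\Bigl(\prod_{v\in V}s^{(v)}_{d_A(v)}\Bigr)z^{2|A|},$$
and every non-empty even subgraph contains a cycle, hence has at least $g(G)$ edges. Thus $[z^{2\ell}]P_G=0$ for $1\le \ell<g(G)$. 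Combined with the palindromicity $P_G(z)=z^{2e(G)}P_G(1/z)$, which is immediate from $s^{(v)}_{d(v)-k}=s^{(v)}_k$ via the involution $A\leftrightarrow E\setminus A$, this gives $[z^{2e(G)-j}]P_G=0$ for every $0<j<2g(G)$ (automatically at odd $j$ since $P_G$ is a polynomial in $z^2$, and at even $j=2\ell<2g(G)$ by palindromicity combined with the previous sentence).

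Translating this vanishing to the roots $\rho_1,\dots,\rho_{2e(G)}$ of $P_G$ via Newton's identities, the elementary symmetric polynomials $e_j$ of the roots satisfy $e_j=0$ for $0<j<2g(G)$, and inductively the power sums $p_k=\sum_i\rho_i^k$ vanish for $0<k<2g(G)$. Therefore
$$\int z^k\,d\mu_{G_n}(z)=\frac{p_k}{2e(G_n)}=0\qquad\text{whenever }\ 1\le k<2g(G_n).$$
Since $g(G_n)\to\infty$, for every fixed $k\ge 1$ we eventually have $\int z^k\,d\mu_{G_n}=0$. The limit measure of $\mu_{G_n}$ thus has all non-trivial Fourier coefficients equal to $0$, so $\mu_{G_n}$ converges weakly to the uniform probability measure $\lambda$ on the unit circle.

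To finish, I compute $p_L(u):=\lim_n v(G_n)^{-1}\ln P_{G_n}(u)$ for $u>1$ and pass to the limit $u\to 1^+$ using the continuity of $p_L$ established in the proof of Theorem~\ref{th: main}. For $u>1$ the function $z\mapsto \ln|u-z|$ is continuous on the unit circle, so by weak convergence and Jensen's formula $\int\ln|u-z|\,d\mu_{G_n}(z)\to \int\ln|u-z|\,d\lambda(z)=\ln u$. Combining this with the decomposition from Section~3 and the convergences $v(G_n)^{-1}|\{v\colon d_{G_n}(v)=k\}|\to t_k$, $e(G_n)/v(G_n)\to \tfrac12\sum_k kt_k$ yields
$$p_L(u)=\sum_{k=0}^{\Delta} t_k\ln\!\left(2^{-k/2}\binom{k}{k/2}\right)+\Bigl(\sum_{k=0}^{\Delta} kt_k\Bigr)\ln u,\qquad u>1.$$
Letting $u\to 1^+$ annihilates the $\ln u$ contribution and produces the formula claimed in Theorem~\ref{large-girth}. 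The main technical step is the Newton-identity translation converting the combinatorial coefficient vanishing into moment vanishing for $\mu_{G_n}$; the remainder is bookkeeping on top of the machinery of Section~3.
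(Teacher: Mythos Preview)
Your proof is correct and follows essentially the same route as the paper: palindromicity plus vanishing of the low coefficients (you phrase it via ``non-empty even subgraph contains a cycle'', the paper via ``small non-empty subgraph has a leaf''), then Newton's identities to kill the moments, hence $\mu_{G_n}\to\lambda$, and finally continuity of $p_L$ at $1$. Your version is slightly more explicit in that you evaluate $\int\ln|u-z|\,d\lambda=\ln u$ via Jensen's formula and write out $p_L(u)$ before letting $u\to1^{+}$, whereas the paper simply records $\lim_{u\to1}\int\ln|u-z|\,d\mu_L=0$.
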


\begin{proof} Recall that for positive real number $u\neq 1$ we had the formula
$$\frac{1}{v(G_n)}\ln P_{G_n}(u)=\frac{1}{v(G_n)}\ln \left(2^{-e(G_n)}\prod_{v\in V}\binom{d_{G_n}(v)}{d_{G_n}(v)/2}\right)+\frac{2e(G_n)}{v(G_n)}\int \ln |u-z| \ d\mu_{G_n}(z).$$
Here the first term converges to 
$$\sum_{k=0}^{\Delta}t_k\ln \left(2^{-k/2}\binom{k}{k/2}\right).$$
We only need to understand the second term. In particular, we need to understand the limit of the measures $\mu_{G_n}$. We claim that this limit measure is the uniform measure on the unit circle. We claim that $P_G(z)=\sum_{k=0}^{2m}a_kz^k$, then $a_{2m-1}=a_{2m-2}=\dots =a_{2m-2g+1}=0$ if the girth is bigger than $g$. Since $a_k=a_{2m-k}$ by the symmetric nature of the vectors $\underline{s}^{(v)}$ we only need to see that $a_1=\dots=a_{2g-1}=0$ which follows since if $A\subseteq E$ satisfies that $0<|A|<g$, then there is a vertex $v$ such that $d_A(v)=1$, and then $\prod_{v\in V}s^{(v)}_{d_A(v)}=0$. From the Newton-Waring formulas we also get that $\int z^k d \mu_G(z)=0$ for $k=1,\dots ,2g-1$. Since $g(G_n)\to \infty$ we get that for the limit measure $\mu_L$ we have $\int z^k\ d \mu_L(z)=0$ for every integer $k\geq 1$. Hence $\mu_L$ is the uniform measure, and
$$\lim_{u\to 1}\int \ln(u-z) \ d\mu_L(z)=0.$$
This completes the proof.

\end{proof}

\begin{Rem}
One can prove that for any $g$ and $\alpha>0$ there exists a $C(d, g,\alpha)>0$ such that
if the $d$-regular graph $G$ contains more than $\alpha v(G)$ cycles of length at most $g$, then
$$\frac{1}{v(G)}\ln \varepsilon(G)>\ln\left(\frac{\binom{d}{d/2}}{2^{d/2}}\right)+C(d, g,\alpha).$$
In other words, if $(G_n)_n$ is a sequence of $d$-regular graphs such that $G_n\not\to \mathbb{T}_d$, then 
$$\limsup_{n\to \infty}\frac{1}{v(G)}\ln \varepsilon(G)>\ln\left(2^{-d/2}\binom{d}{d/2}\right).$$
This statement can be seen from the subgraph counting polynomial.

Theorem~\ref{large-girth} also follows from Theorem 5.1 of \cite{isaev2024correlation}.

\end{Rem}

\subsection{What goes wrong with perfect matchings?}

To have a better understanding of the proof strategy used in this paper we carefully analyze another graph invariant in this section, namely, the number of perfect matchings, hereafter denoted by $\mathrm{pm}(G)$.

Clearly, if we have a graph with a lot of perfect matchings, and we delete one vertex the number of perfect matchings drops to zero. This means that we need to impose some restriction on the graph class. Note that even in the case of Eulerian orientations we needed to require that the elements of the graph sequence $(G_n)_n$ are Eulerian graphs. Unfortunately, even with the assumption that all $G_n$ are $d$-regular bipartite graphs one can construct a sequence of graphs $(G_n)_n$ such that $\frac{1}{v(G_n)}\ln \mathrm{pm}(G_n)$ is not convergent \cite{abert2016matchings}. Nevertheless, there is one positive result: it is convergent if $G_n$ are not only $d$-regular bipartite graphs, but $g(G_n)\to \infty$ is also satisfied \cite{abert2016matchings}. 

It is very instructive to see what goes wrong in the case of the number of perfect matchings in our proof. Suppose for simplicity that $G_n$ are $4$-regular graphs. Then $\mathrm{pm}(G)=F_G(0,1,0,0,0)$ by the definition of the subgraph counting polynomial. This would not be very useful as $F_G(0,1,0,0,0|z)=\mathrm{pm}(G)z^{v(G)}$. Fortunately, $F_G(x_0,x_1,x_2,x_3,x_4)$ takes the same value at several different places due to some invariance under ``rotations'', see details in \cite{borbenyi2020counting}. In particular,
$$F_G(0,1,0,0,0)=F_G\left(1,-\frac{1}{2},0,\frac{1}{2},-1\right).$$
For this vector we have
$$K_v(z)=1-2z+2z^3-z^4=(1-z)^3(1+z),$$
so all zeros have absolute value $1$. (There is always such a vector for $(0,1,0,\dots,0)$ no matter what $d$ is.) This means that
$$F_G\left(1,-\frac{1}{2},0,\frac{1}{2},-1\bigg|\ z\right)$$
have all zeros lying on the unit circle. It even implies that the function \\ $P_G(u):=F_G\left(1,-\frac{1}{2},0,\frac{1}{2},-1|u\right)$ is non-negative for real $u>1$ implying that for such a $u$ the
$$p_L(u):=\lim_{n\to \infty}\frac{1}{v(G_n)}\ln P_{G_n}(u)$$
exists. If $\mathrm{pm}(G_n)\neq 0$, then $P_{G_n}(1)\neq 0$ and we can also deduce that $P_{G_n}(u)>0$ for $0<u<1$ so $p_L(u)$ exists in this case. Unfortunately, since the coefficients of $P_G(z)$ are not necessarily non-negative  we cannot argue that it is monotone increasing, and that $p_L(u)$ is continuous at $1$. 

Though this strategy does not work in the case of perfect matchings, it is still instructive to see how gauge transformation gives us a great flexibility to choose the vectors in such a way that we can apply a Lee-Yang-type theorem.

\bibliography{bibliography}
\bibliographystyle{plain}

\newpage

\section{Appendix: Eulerian orientations via subgraph counting polynomial}

In this appendix we give a self-contained proof of Theorem~\ref{Euler orientations, regular}. First we introduce the so-called normal factor graph and gauge transformation, then we prove the aforementioned theorem.

\subsection{Normal factor graphs and gauge transformations}

The following concept will enable us to encode the number of Eulerian orientations and the subgraph counting polynomial in a unified framwework.

\begin{Def} A normal factor graph $\mathcal{H}=(V,E,\mathcal{X},(f_v)_{v\in V})$ is a graph $(V,E)$ equipped with an alphabet $\mathcal{X}$ and  a function $f_v: \mathcal{X}^{d_v}\to \mathbb{R}$ at each vertex. At each edge $e$ there is a variable $x_e$ taking values from the alphabet $\mathcal{X}$. The partition function 
$$Z(\mathcal{H})=\sum_{\sigma\in \mathcal{X}^E}\prod_{v\in V}f_v(\sigma_{\partial v}),$$ 
where $\sigma_{\partial v}$ is the restriction of $\sigma$ to the the edges incident to the vertex $v$. 
\end{Def}

For instance, if $\mathcal{X}=\{0,1\}$ and 
$$f_v(\sigma_1,\dots ,\sigma_{d_v})=\left\{ \begin{array}{cl} 1 & \mbox{if}\ \sum_{i=1}^{d_v}\sigma_i=1, \\ 0 & \mbox{otherwise}, \end{array} \right.$$
where $d_v$ is the degree of the vertex $v$, 
then $Z(\mathcal{H})$ is exactly the number of perfect matchings of the underlying graph. 

Let $\mathcal{H}=(V,E,\mathcal{X},(f_v)_{v\in V})$ be a normal factor graph with alphabet $\mathcal{X}$. We will show that it is possible to introduce a new normal factor graph $\widehat{\mathcal{H}}=(V,E,\mathcal{Y},(\widehat{f_v})_{v\in V})$ on the same graph with new functions $\widehat{f_v}$ and alphabet $\mathcal{Y}$ such that $Z(\widehat{\mathcal{H}})=Z(\mathcal{H})$. As we will see, sometimes it will be more convenient to study the new normal factor graph $\widehat{\mathcal{H}}$. 

Let $\mathcal{Y}$ be a new alphabet, and for each edge $(u,v)\in E$ let us introduce two new matrices, $G_{uv}$ and $G_{vu}$ of size $\mathcal{Y}\times \mathcal{X}$. The new variables will be denoted by $\tau\in \mathcal{Y}^E$, the old ones by $\sigma\in \mathcal{X}^E$. For a vertex $v$ with degree $d_v=k$ let
$$\widehat{f_v}(\tau_{vu_1},\dots ,\tau_{vu_k})=\sum_{\sigma_{vu_1},\dots ,\sigma_{vu_k}}\left(\prod_{u_i \in N(v)}G_{vu_i}(\tau_{vu_i},\sigma_{vu_i})\right)f_v(\sigma_{vu_1},\dots ,\sigma_{vu_k}).$$
This way we defined the functions $\widehat{f_v}$ of $\widehat{\mathcal{H}}$. 
\medskip

This transformation is called a gauge transformation. In computer science, this method was introduced by  Valiant under the name holographic reduction \cite{valiant2008holographic,valiant2006accidental,valiant2002quantum,valiant2002expressiveness}. In statistical physics, it was developed by Chertkov and  Chernyak under the name gauge transformation \cite{chertkov2006loop2,chertkov2006loop1}. Wainwright, Jaakola, Willsky had a related idea under the name reparametrization \cite{wainwright2003tree}, but it is not easy to see the connection. In the different cases the scope was slightly different, Valiant used it as a reduction method for computational complexity of counting problems. This line of research was extended in a series of papers of Jin-Yi Cai and his coauthors, see Jin-Yi Cai's book \cite{cai2017complexity} and the papers \cite{cai2007holographic,cai2008basis,cai2007symmetric,cai2008holographic, cai2011holographic, cailu2008holographic} and references therein. Chertkov and Chernyak \cite{chertkov2006loop2,chertkov2006loop1} studied the so-called Bethe--approximation through gauge transformations. We simply use it as a method of proving the identities like Theorem~\ref{Euler orientations, non-regular}.

The following theorem is due to Chertkov and Chernyak \cite{chertkov2006loop2,chertkov2006loop1} and independently Valiant \cite{valiant2008holographic}. 

\begin{Th} If for each edge $(u,v)\in E$ we have $G^T_{uv}G_{vu}=\mathrm{Id}_{\mathcal{X}}$, then $Z(\widehat{\mathcal{H}})=Z(\mathcal{H})$. 
\end{Th}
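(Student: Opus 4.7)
The plan is to expand the definition of $Z(\widehat{\mathcal{H}})$, substitute the formula for each $\widehat{f_v}$ so as to introduce an independent vertex-local copy of every edge-variable, then swap the order of summation so that the sum over the new $\tau$-variables is performed first on each edge, and finally use the orthogonality hypothesis $G^T_{uv}G_{vu}=\mathrm{Id}_{\mathcal{X}}$ to force the two local copies at the endpoints of each edge to agree, thereby collapsing the resulting expression back to $Z(\mathcal{H})$.

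Concretely, I would first write
$$Z(\widehat{\mathcal{H}})=\sum_{\tau\in\mathcal{Y}^E}\prod_{v\in V}\widehat{f_v}(\tau_{\partial v})$$
and substitute the definition of $\widehat{f_v}$, which introduces, at every vertex $v$, its own independent vector $\sigma^{(v)}\in\mathcal{X}^{d_v}$ of local $\mathcal{X}$-valued copies indexed by the edges at $v$. Pulling every $\sigma^{(v)}$-sum to the outside and keeping only the $\tau$-sum inside yields
$$Z(\widehat{\mathcal{H}})=\sum_{(\sigma^{(v)})_{v\in V}}\Bigl(\prod_{v\in V}f_v(\sigma^{(v)})\Bigr)\sum_{\tau\in\mathcal{Y}^E}\prod_{v\in V}\prod_{u\in N(v)}G_{vu}(\tau_{vu},\sigma^{(v)}_{vu}).$$

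The next step is to regroup the innermost product by edges rather than by vertices: each edge $e=\{u,v\}$ contributes exactly the two factors $G_{vu}(\tau_e,\sigma^{(v)}_{e})$ and $G_{uv}(\tau_e,\sigma^{(u)}_{e})$, one from each of its endpoints. Summing their product over $\tau_e\in\mathcal{Y}$ gives precisely the $(\sigma^{(u)}_e,\sigma^{(v)}_e)$-entry of $G^T_{uv}G_{vu}$, which by hypothesis equals the Kronecker delta $\mathbf{1}[\sigma^{(u)}_e=\sigma^{(v)}_e]$. Thus the inner $\tau$-sum factorises over $E$ into a product of these deltas, and only those configurations of local copies survive for which $\sigma^{(u)}_e=\sigma^{(v)}_e$ on every edge. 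The surviving configurations are in bijection with single global labellings $\sigma\in\mathcal{X}^E$, and what remains is exactly $\sum_{\sigma\in\mathcal{X}^E}\prod_{v\in V}f_v(\sigma_{\partial v})=Z(\mathcal{H})$.

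I expect the argument to be essentially routine, with the only real care needed in the bookkeeping that distinguishes vertex-local copies $\sigma^{(v)}$ from global edge-variables $\sigma$, and in checking the orientation convention so that each edge $e=\{u,v\}$ contributes one factor $G_{vu}$ and one factor $G_{uv}$ (rather than two copies of the same matrix). The single substantive input is the biorthogonality relation along each edge; everything else is a Fubini-style rearrangement of sums and products.
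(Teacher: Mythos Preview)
Your proposal is correct and follows essentially the same route as the paper's proof: expand $Z(\widehat{\mathcal{H}})$, introduce for each vertex its own local copies of the edge variables, swap summations, regroup the $\tau$-sum edge by edge so that each edge contributes $(G^T_{uv}G_{vu})_{\sigma^{(u)}_e,\sigma^{(v)}_e}$, and use the identity hypothesis to force the local copies to agree. Your bookkeeping with $\sigma^{(v)}$ versus the paper's $\sigma_{vu}$ is merely notational, and your care about each edge contributing one $G_{uv}$ and one $G_{vu}$ matches exactly what the paper checks.
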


\begin{proof}
Let us start to compute 
$Z(\widehat{\mathcal{H}})=\sum_{\tau\in \mathcal{Y}^E}\prod_{v\in V}\widehat{f_v}(\tau_{\partial v})$:
$$Z(\widehat{\mathcal{H}})=\sum_{\tau\in \mathcal{Y}^E}\prod_{v\in V}\left[\sum_{\sigma_{vu_1},\dots ,\sigma_{vu_k}}\left(\prod_{u_i \in N(v)}G_{vu_i}(\tau_{vu_i},\sigma_{vu_i})\right)f_v(\sigma_{vu_1},\dots ,\sigma_{vu_k})\right].$$
If we expand it will have terms $\prod_{v\in V}f_v(\sigma_{vu_1},\dots ,\sigma_{vu_k})$ with some coefficients. A priori it can occur that these terms are incompatible in the sense that $\sigma_{uv}\neq \sigma_{vu}$. As we will see, the role of the conditions on $G_{uv}$ is exactly to ensure that if there is an edge $(u,v)\in E$ with 
$\sigma_{uv}\neq \sigma_{vu}$, then the coefficient is $0$, and if all edges are compatible, then the coefficient is $1$. Indeed, the coefficient is
$$\sum_{\tau\in \mathcal{Y}^E}\prod_{v\in V}\prod_{u_i \in N(v)}G_{vu_i}(\tau_{vu_i},\sigma_{vu_i}).$$
Note that $\tau_{uv}=\tau_{vu}$ for each edge, and this variable appears only at the vertices $u$ and $v$, and nowhere else. Hence
$$\sum_{\tau\in \mathcal{Y}^E}\prod_{v\in V}\prod_{u_i \in N(v)}G_{vu_i}(\tau_{vu_i},\sigma_{vu_i})=\prod_{(u,v)\in E}\left(\sum_{\tau_{uv}}G_{uv}(\tau_{uv},\sigma_{uv})G_{vu}(\tau_{vu},\sigma_{vu})\right)=$$
$$=\prod_{(u,v)\in E}\left(\sum_{\tau_{uv}}G^T_{uv}(\sigma_{uv},\tau_{vu})G_{vu}(\tau_{vu},\sigma_{vu})\right)=\prod_{(u,v)\in E}(G^T_{uv}G_{vu})_{\sigma_{uv},\sigma_{vu}}=
\prod_{(u,v)\in E}(\mathrm{Id})_{\sigma_{uv},\sigma_{vu}}.$$
Hence this is only non-zero if $\sigma_{uv}=\sigma_{vu}$ for each edge $(u,v)\in E(G)$, and then this coefficient is $1$.
\end{proof}

\subsection{Eulerian orientations}

In this section we prove Theorem~\ref{Euler orientations, regular}. For sake of convenience we repeat the theorem.
\bigskip

\noindent \textbf{Theorem~\ref{Euler orientations, regular}} 
{\textit{Let $\underline{s}=(s_0,s_1,\dots ,s_d)$ be defined as follows.
$$s_k=\left\{ 
\begin{array}{cc} \frac{\binom{d}{d/2}\binom{d/2}{k/2}}{2^{d/2}\binom{d}{k}} & \mbox{if}\ \ k\ \ \mbox{is even}, \\
0  & \mbox{if}\ \ k\ \ \mbox{is odd}.
\end{array} \right.$$
Then $F_G(s_0,\dots ,s_d)$ counts the number of Eulerian orientations of a $d$--regular graph $G$.}}
\bigskip

\begin{proof}
First we encode the number of Eulerian orientations as a partition function of a normal factor graph. Let $\mathrm{Sub}(G)$ be the subdivision of the graph $G$, that is, we put a vertex to every edge. The vertex set of $\mathrm{Sub}(G)$ naturally correspond to $V\cup E$, where $G=(V,E)$. An orientation of $G$ correspond to an edge configuration of $\mathrm{Sub}(G)$, where each edge $e\in V(\mathrm{Sub}(G))$ is incident to exactly one edge: a directed edge $(v,u)$ corresponds a configuration, where $(v,e_{v,u})$ belongs to the configuration, but $(u,e_{v,u})$ does not. So we can describe an Eulerian orientation with the local functions
$$f_v(\sigma_{v,e_{v,u_1}},\dots ,\sigma_{v,e_{v,u_d}})=
\begin{cases}
1 & \mbox{if}\ \ \sum_{u_i\in N_G(v)}\sigma_{ve_{v,u}}=d/2, \\
0 & \mbox{if}\ \ \sum_{u_i\in N_G(v)}\sigma_{ve_{v,u}}\neq d/2.
\end{cases}
$$
and
$$f_{e_{u,v}}(\sigma_{u,e_{u,v}},\sigma_{v,e_{u,v}})=
\begin{cases}
1 & \mbox{if}\ \ \sigma_{u,e_{u,v}},\sigma_{v,e_{u,v}}=1, \\
0 & \mbox{if}\ \ \sigma_{u,e_{u,v}},\sigma_{v,e_{u,v}}\neq 1.
\end{cases}
$$
Next we use the gauge theory. For each edge $e=(u,v)\in E(G)$ we introduce two matrices in $\mathrm{Sub}(G)$:
$G_{eu}=G_{ev}=G_1$ and $G_{ue}=G_{ve}=G_2$, where
$$G_1:=\frac{1}{\sqrt{2}}\left( \begin{array}{cc} 1 & 1 \\ i& -i \end{array}\right)\ \ \ \mbox{and} \ \ \ G_2:=\frac{1}{\sqrt{2}}\left( \begin{array}{cc} 1 & 1 \\ -i & i \end{array}\right).$$
In what follows  the rows and columns of $G_1,G_2,F_e$ are indexed by $0$ and $1$, and for a matrix $A$ and $\sigma,\tau\in \{0,1\}$ we use the notation $A(\sigma,\tau)$ for the corresponding element. In particular, we have
$$F_e=\left( \begin{array}{cc} 0 & 1 \\ 1& 0 \end{array}\right)$$
Observe that $G_2^TG_1=\mathrm{Id}$. First let us compute $\widehat{f_e}(\tau_1,\tau_2)$:
\begin{align*}\widehat{f_e}(\tau_1,\tau_2)&=\sum_{\sigma_1,\sigma_2}G_1(\tau_1,\sigma_1)G_1(\tau_2,\sigma_2)f_e(\sigma_1,\sigma_2)\\
&=\sum_{\sigma_1,\sigma_2}G_1(\tau_1,\sigma_1)f_e(\sigma_1,\sigma_2)G^T_1(\sigma_2,\tau_2)\\
&=(G_1F_eG^T_1)(\tau_1,\tau_2).
\end{align*}
Hence by simple matrix multiplication we have
$$\widehat{F_e}=G_1F_eG^T_1=\left( \begin{array}{cc} 1 & 0 \\ 0 & 1 \end{array}\right).$$
This means that in $Z(\widehat{\mathcal{H}})$ only those terms will survive that correspond to a subgraph of $G$. 

Next let us compute $\widehat{f_v}(\tau_1,\dots ,\tau_{d})$. By definition
$$\widehat{f_v}(\tau_1,\dots ,\tau_{d})=\sum_{\sigma_1,\dots ,\sigma_{d}}\prod_{i=1}^{d}G_2(\tau_i,\sigma_i)f_v(\sigma_1,\dots ,\sigma_{d}).$$
Recall that only those terms remain, where $\sum_i\sigma_i=\frac{d}{2}$. Suppose that $\sum_i\tau_i=k$. If there are $j$ places where both $\sigma_i=\tau_i=1$, then its contribution to the sum is $i^j(-i)^{k-j}$, so
$$\widehat{f_v}(\tau_1,\dots ,\tau_{d})=\sum_{\sigma_1,\dots ,\sigma_{d}}\prod_{i=1}^{d}G_2(\tau_i,\sigma_i)f_v(\sigma_1,\dots ,\sigma_{d})=\frac{1}{2^{d/2}}\sum_{j=0}^k\binom{k}{j}\binom{d-k}{d/2-j}(-1)^{k-j}i^k.$$
Observe that
\begin{align*}
\sum_{j=0}^{d/2}(-1)^{k-j}\binom{k}{j}\binom{d-k}{d/2-j}&=\sum_{j=0}^{d/2}(-1)^{k-j}\frac{k!(d-k)!}{j!(k-j)!(d/2-j)!(d/2-k+j)!}\\
&=\frac{\binom{d}{d/2}}{\binom{d}{k}}\sum_{j=0}^{d/2}(-1)^{k-j}\binom{d/2}{j}\binom{d/2}{k-j}.
\end{align*}
Note that $\sum_{j=0}^{d/2}(-1)^{k-j}\binom{d/2}{j}\binom{d/2}{k-j}$ is the coefficient of $x^k$ in  
$$(1-x)^{d/2}(1+x)^{d/2}=(1-x^2)^{d/2}$$
which is clearly $0$ if $k$ is odd, and $(-1)^{k/2}\binom{d/2}{k/2}$ if $k$ is even. Hence
$$\widehat{f_v}(\tau_1,\dots ,\tau_{d})=s_{||\tau||_1}.$$
This means that 
$$Z(\widehat{\mathcal{H}})=F_G(s_0,\dots ,s_d).$$
\end{proof}

\begin{Rem}
Let $G$ be a regular graph, and let
$$H_G(y_{-d},y_{-d+2},\dots ,y_{d-2},y_d)=\sum_{\mathcal{O}}\prod_{v\in V}y_{d^+_{\mathcal{O}(v)}-d^-_{\mathcal{O}}(v)},$$
where the summation goes for all orientations of the graph $G$, and $d^+_{\mathcal{O}(v)}$ and $d^-_{\mathcal{O}}(v)$ are the out-degree and in-degree of $G$ in $\mathcal{O}$. In general it is true that there is a $(d+1)\times (d+1)$ matrix $M_d$ such that
$H_G(\underline{x})=F_G(M_d\underline{x})$ for every graph $G$ and $\underline{x}\in \mathbb{C}^{d+1}$. This statement also extends to non-regular graphs.

\end{Rem}

\end{document}